\newcolumntype{L}{>{\displaystyle}l}
\newcolumntype{C}{>{\displaystyle}c}
\newcolumntype{R}{>{\displaystyle}r}
\newcommand{\R}{\ensuremath{\mathbb{R}}}
\newcommand{\CC}{\mathcal{C}}
\newcommand{\CO}{\ensuremath{\mathcal{O}}}
\newcommand{\ov}{\overline}
\newcommand{\T}{\theta}
\newcommand{\sgn}{\mathrm{sign}}
\def\e{\varepsilon}
\newtheorem {theorem} {Theorem}
\newtheorem {lemma} [theorem] {Lemma}
\newtheorem {remark}[theorem] {Remark}
\begin{document}

\title[Limit cycles of control piecewise linear differential systems]
{Limit cycles for two classes of control piecewise linear
differential systems}

\author[J. Llibre, R.D. Oliveira and C.A.B. Rodrigues]
{Jaume Llibre$^1$, Regilene D. Oliveira$^{2}$  and Camila Ap. B.
Rodrigues$^3$}

\address{$^1$ Departament de Matematiques,
Universitat Aut\`{o}noma de Barcelona, 08193 Bellaterra, Barcelona,
Catalonia, Spain} \email{jllibre@mat.uab.cat}

\address{$^{2,3}$  Department of Mathematic, ICMC-Universidade de S\~{a}o Paulo,
Avenida Trabalhador S\~ao-carlense, 400, 13566-590, S\~{a}o Carlos, SP, Brazil}
\email{regilene@icmc.usp.br, camilaap@icmc.usp.br}

\begin{abstract}
We study the bifurcation of limit cycles from the periodic orbits of
$2n$--dimensional linear centers $\dot{x} = A_0 x$ when they are
perturbed inside classes of continuous and discontinuous piecewise
linear differential systems of control theory of the form $\dot{x} =
A_0 x + \e \big(A x + \phi(x_1) b\big)$, where $\phi$ is a
continuous or discontinuous piecewise linear function, $A_0$ is a
$2n\times 2n$ matrix with only purely imaginary eigenvalues, $\e$ is
a small parameter, $A$ is an arbitrary $2n\times 2n$ matrix, and $b$
is an arbitrary vector of $\R^n$.
\end{abstract}

\keywords{limit cycles; discontinuous piecewise linear differential system; bifurcation.}

\subjclass[2010]{Primary: 58F15, 58F17; Secondary: 53C35.}

\maketitle

\section{Introduction and statement of the main results}

In control theory are relevant the {\it continuous piecewise linear
differential systems} of the form
\begin{equation}\label{E1}
\dot x= A x + \varphi(x_1) b,
\end{equation}
with $A$ a $m\times m$ matrix, $x,b\in \R^m$, $\varphi: \R \to \R$
is the continuous piecewise linear function
\begin{equation}\label{eq phi}
\varphi(x_1) = \left\{
                        \begin{array}{RL}
                          -1 & \hbox{if $x_1 \in (-\infty,-1)$,} \\
                          x_1 & \hbox{if $x_1 \in [-1,1]$,} \\
                          1 & \hbox{if $x_1 \in (1,\infty)$,}
                        \end{array}
                      \right.
\end{equation}
where $x = (x_1,\ldots,x_m)^T$, and the dot denotes the derivative
with respect to the independent variable $t$, the time.

Also in control theory are important the {\it discontinuous
piecewise linear differential systems} of the form \eqref{E1} where
instead of the function $\varphi$ we have the discontinuous
piecewise linear function
\begin{equation}\label{eq phi 0}
\psi(x_1) = \left\{
                        \begin{array}{RL}
                          -1 & \hbox{if $x_1 \in (-\infty,0)$,} \\
                           1 & \hbox{if $x_1 \in (0,\infty)$.}
                        \end{array}
                      \right.
\end{equation}
For more details on these continuous and discontinuous piecewise
linear differential systems see for instance the books \cite{Ai, BC,
Gi, Kh, Me, Vi}.

The analysis of discontinuous piecewise linear differential systems
goes back mainly to Andronov and coworkers \cite{AVK-1966} and
nowadays still continues to receive attention by many researchers.
In particular, discontinuous piecewise linear differential systems
appear in a natural way in control theory and in the study of
mechanical systems, electrical circuits, ... see for instance the
book \cite{BBCK-2008} and the references quoted there. These systems
can present complicated dynamical phenomena such as those exhibited
by general nonlinear differential systems.

One of the main ingredients in the qualitative description of the
dynamical behavior of a differential system is the number and the
distribution of its limit cycles. The goal of this paper is to study
analytically the existence of limit cycles for a class of continuous
and a class of discontinuous piecewise linear differential of the
form \eqref{E1}.

More precisely, first we consider the class of continuous piecewise
linear differential systems
\begin{equation}\label{eq inicial}
\dot{x} = A_0 x  + \e \big(A x + \varphi(x_1) b\big),
\end{equation}
with $| \e | \neq 0$ a sufficiently small real parameter, where
$A_0$ is the $2n\times 2n$ matrix having on its principal diagonal
the following $2\times 2$ matrices
$$
\left(
\begin{array}{cc}
 0 & -(2k-1)\\
2k-1 & 0
\end{array}
\right) \qquad \mbox{for $k=1,\ldots,n$,}
$$
and zeros in the complement, $A$ is an arbitrary $2n\times 2n$
matrix and $b\in \R^{2n} \setminus \{0 \}$. Note that for $\e = 0$
system \eqref{eq inicial} becomes
\begin{equation}\label{eq epsilon zero}
\dot{x}_1 = -x_2, \quad \dot{x}_2 = x_1, \quad \ldots \quad
,\dot{x}_{2n-1} = -(2n-1) x_{2n}, \quad \dot{x}_{2n} = (2n-1) x_{2n-1}.
\end{equation}
Moreover, the origin of \eqref{eq epsilon zero} is a \textit{global
isochronous center} in $\R^{2n}$, i.e. all its orbits different from
the origin are periodic with period $2 \pi$.

In a similar way we consider the discontinuous piecewise linear
differential systems
\begin{equation}\label{eq iniciald}
\dot{x} = A_0 x  + \e \big(A x + \psi(x_1) b\big).
\end{equation}

Our main results on the limit cycles of the continuous and
discontinuous piecewise linear differential systems \eqref{eq
inicial} are the following ones.

\begin{theorem}\label{main theorem}
For $|\e|>0$ sufficiently small and if the conditions for applying
the averaging theory of first order hold, then at most one limit
cycle $\gamma_\e$ of the continuous piecewise linear differential
system \eqref{eq inicial} bifurcates from the periodic orbits of
system \eqref{eq epsilon zero}, i.e. $\gamma_\e$ tends to a periodic
solution of system \eqref{eq epsilon zero} when $\e\to 0$. Moreover
there are systems \eqref{eq inicial} with $|\e|>0$ sufficiently
small having a such limit cycle.
\end{theorem}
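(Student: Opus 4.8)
The plan is to bring system \eqref{eq inicial} into the standard form required by the first-order averaging theory, compute the corresponding averaged function, and reduce the problem of counting its zeros to a single scalar equation whose number of solutions is controlled by the geometry of $\varphi$.

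First I would remove the unperturbed linear part. Since $A_0$ has only the eigenvalues $\pm(2k-1)i$ and generates the $2\pi$--periodic flow $e^{A_0 t}$, the change of variables $y = e^{-A_0 t} x$ transforms \eqref{eq inicial} into
\[
\dot{y} = \e\, e^{-A_0 t}\Big(A\, e^{A_0 t} y + \varphi\big((e^{A_0 t} y)_1\big)\, b\Big) =: \e\, G(t,y),
\]
where $G$ is continuous and $2\pi$--periodic in $t$. As $\varphi$ is Lipschitz but not $C^1$, I would invoke the version of the first-order averaging theory valid for continuous vector fields (via the Brouwer degree), so that each nondegenerate zero of the averaged function produces, for $|\e|>0$ small, a $2\pi$--periodic solution $y_\e$ of the $y$--system, hence a limit cycle $\gamma_\e$ of \eqref{eq inicial} tending to the corresponding periodic orbit of \eqref{eq epsilon zero} as $\e\to 0$.

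Next I would compute the averaged function
\[
g(y) = \frac{1}{2\pi}\int_0^{2\pi} G(t,y)\, dt = M y + v(y), \qquad M = \frac{1}{2\pi}\int_0^{2\pi} e^{-A_0 t} A\, e^{A_0 t}\, dt,
\]
where $M$ is the projection of $A$ onto the centralizer of $A_0$ (only the resonant entries survive, because the blocks of $A_0$ rotate at the distinct frequencies $1,3,\ldots,2n-1$), and $v(y)$ collects the contribution of $\varphi$. The key structural fact is that $\varphi$ is evaluated only at the first coordinate, and $(e^{A_0 t} y)_1 = \rho\cos(t+\be)$ with $\rho=\sqrt{y_1^2+y_2^2}$ and $\be=\arg(y_1,y_2)$. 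Expanding $\varphi(\rho\cos\al)=\sum_{j\ \text{odd}} c_j(\rho)\cos(j\al)$ in its odd harmonics and using orthogonality, the integral defining $v(y)$ selects in the $k$--th block exactly the harmonic $j=2k-1$, so that $v(y)$ depends on $y$ only through $(\rho,\be)$.

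The main step is then the reduction to a scalar equation. Solving the linear part gives $y = -M^{-1} v(\rho,\be)$ whenever $M$ is invertible; imposing that the first block of this $y$ has amplitude $\rho$ and phase $\be$, and using the rotational symmetry of the first block to normalise $\be$, collapses $g(y)=0$ to a single self--consistency equation $h(\rho)=0$ with $\rho>0$. Here the explicit form of the coefficients $c_j(\rho)$ is decisive: for $\rho\le 1$ one has $\varphi(\rho\cos\al)=\rho\cos\al$, so $c_1(\rho)=\rho$ and $c_j\equiv 0$ for $j\ge 3$, while for $\rho>1$ the coefficients are concave, saturating functions expressed through $\acos(1/\rho)$. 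This monotonicity makes $h$ cross zero at most once, yielding at most one limit cycle. The hardest part will be establishing this at--most--one--root property uniformly: controlling the invertibility and block structure of $M$ and proving that the resulting nonlinear self--consistency map is strictly monotone on $\rho>0$. Finally, to prove the existence claim I would exhibit a concrete choice of $A$ and $b$ for which $h$ has a simple zero $\rho_0>0$, check that the associated Jacobian (equivalently the Brouwer degree) is nonzero, and conclude that the corresponding $\gamma_\e$ is a genuine limit cycle.
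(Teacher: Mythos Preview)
Your rotating-frame reduction $y=e^{-A_0t}x$ leads to a genuine obstruction that you have not addressed. Because $G(t,y)=e^{-A_0t}\big(Ae^{A_0t}y+\varphi((e^{A_0t}y)_1)b\big)$ satisfies $G(t+s,y)=e^{-A_0s}G(t,e^{A_0s}y)$, the averaged map $g$ is $S^1$--equivariant: $g(e^{A_0s}y)=e^{A_0s}g(y)$. Hence every nonzero zero of $g$ lies on a whole circle of zeros, the Jacobian $Dg$ is singular along it, and hypothesis~(ii) of the averaging theorem (isolated zero with nonzero Brouwer degree) is never met. Your remedy ``normalise $\beta$'' does not fix this: with $\beta=0$ the first block of $g(y)=0$ gives \emph{two} real equations in the single unknown $\rho$,
\[
\tfrac12(a_{11}+a_{22})\,\rho+\tfrac{b_1}{2\pi}I_1(\rho)=0,\qquad
\tfrac12(a_{21}-a_{12})\,\rho+\tfrac{b_2}{2\pi}I_1(\rho)=0,
\]
so for generic $A,b$ your self-consistency system is overdetermined and has no solution, even when a limit cycle does bifurcate. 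The hidden reason is that the rotating-frame method looks for solutions of the \emph{autonomous} system with period exactly $2\pi$, whereas the bifurcating cycle has period $2\pi+O(\e)$; the second equation above is precisely the first-order period condition.

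The paper resolves this by quotienting out the $S^1$--action \emph{before} averaging: it passes to generalised polar coordinates $x_1=r\cos\T$, $x_2=r\sin\T$, $x_{2j-1}=r_{j-1}\cos((2j-1)\T+\T_{j-1})$, $x_{2j}=r_{j-1}\sin((2j-1)\T+\T_{j-1})$ and takes $\T$ as the new time. This yields a $(2n-1)$--dimensional averaged map $h=(h_1,\ldots,h_{2n-1})$ whose zeros can be isolated. The first component decouples as $h_1=(a_{11}+a_{22})\pi r+b_1K(r)$; since $K$ is increasing and concave on $(1,\infty)$ with $K(1)=\pi$, the ratio $K(r)/r$ is strictly decreasing, so $h_1=0$ has at most one root $r_0>1$ (none for $r\le1$, where the averaged system degenerates). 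With $r=r_0$ fixed, each pair $h_{2(j-1)}=h_{2j-1}=0$ becomes a linear system in $(\cos\T_{j-1},\sin\T_{j-1})$ with coefficients depending on $r_{j-1}$, which together with $\cos^2+\sin^2=1$ admits at most one solution with $r_{j-1}>0$. An explicit $4$--dimensional example then realises the bound. Your Fourier/harmonic-selection heuristic for $v(y)$ is correct and parallels the paper's computation of $I_j(r)$, but to make the argument go through you must perform the angular reduction at the level of the differential system, not after averaging.
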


Theorem \ref{main theorem} is proved in section \ref{s3}.

\begin{theorem}\label{main theoremd}
For $|\e|>0$ sufficiently small and if the conditions for applying
the averaging theory of first order hold, then at most one limit
cycle $\gamma_\e$ of the discontinuous piecewise linear differential
system \eqref{eq iniciald} bifurcates from the periodic orbits of
system \eqref{eq epsilon zero}. Moreover there are systems \eqref{eq
iniciald} with $|\e|>0$ sufficiently small having a such limit
cycle.
\end{theorem}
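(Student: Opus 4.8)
The plan is to bring the discontinuous system \eqref{eq iniciald} into the standard form required by the first--order averaging theory for discontinuous piecewise linear differential systems recalled in the previous section, and then to analyse the zeros of the resulting averaged function. First I would apply the linear change of variables $x=e^{A_0 t}y$, which trivializes the unperturbed flow and rewrites \eqref{eq iniciald} as $\dot y=\e\,e^{-A_0 t}\big(Ae^{A_0 t}y+\psi((e^{A_0 t}y)_1)\,b\big)$, a $2\pi$--periodic perturbation of order $\e$. Since the center \eqref{eq epsilon zero} is isochronous of period $2\pi$, the averaged function is
\[
f(y)=\bar A\,y+\frac{1}{2\pi}\Big(\int_0^{2\pi}\psi\big((e^{A_0 t}y)_1\big)\,e^{-A_0 t}\,dt\Big)b,\qquad \bar A:=\frac{1}{2\pi}\int_0^{2\pi}e^{-A_0 t}A\,e^{A_0 t}\,dt,
\]
and simple zeros of $f$ (in the symmetry--reduced sense below) that correspond to transversal crossings of the switching set $\{x_1=0\}$ produce the limit cycles bifurcating from \eqref{eq epsilon zero}.

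The next step exploits two structural facts. On one hand, $\bar A$ commutes with $e^{A_0 s}$ for every $s$, so it is block--diagonal and acts on the $k$--th $2\times2$ block as multiplication by a complex number $\mu_k$, once we identify the $k$--th block $(y_{2k-1},y_{2k})$ with $\zeta_k=y_{2k-1}+\mathrm{i}\,y_{2k}=r_k e^{\mathrm{i}\phi_k}$. On the other hand, writing $x_1=(e^{A_0 t}y)_1=r_1\cos(t+\phi_1)$ and expanding $\sgn(\cos u)=\frac{4}{\pi}\sum_{m\ge 0}\frac{(-1)^m}{2m+1}\cos((2m+1)u)$, only the resonant harmonic $2m+1=2k-1$ contributes to the $k$--th block of the integral, because the frequencies $1,3,\dots,2n-1$ are odd. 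A direct computation then gives that the $k$--th complex block of the nonlinear term equals $\beta_k e^{\mathrm{i}(2k-1)\phi_1}$ with $\beta_k=\frac{2(-1)^{k-1}}{\pi(2k-1)}(b_{2k-1}+\mathrm{i}\,b_{2k})$; in particular it is homogeneous of degree zero and depends only on the phase $\phi_1$ of the first oscillator. Thus $f(y)=0$ decouples into the $n$ complex equations $\mu_k\zeta_k+\beta_k e^{\mathrm{i}(2k-1)\phi_1}=0$, $k=1,\dots,n$.

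Now comes the mechanism forcing uniqueness. For $k=1$ we have $\zeta_1=r_1 e^{\mathrm{i}\phi_1}$, so the factor $e^{\mathrm{i}\phi_1}$ cancels and the first equation collapses to the single real relation $\mu_1 r_1=-\beta_1$; hence there is \emph{at most one} admissible amplitude $r_1=-\beta_1/\mu_1$, and it is admissible exactly when $\mu_1\neq 0$ and $-\beta_1/\mu_1\in\R_{>0}$ (note that $r_1>0$ is precisely the condition making the unperturbed orbit cross $\{x_1=0\}$ transversally). The remaining blocks are then slaved, $\zeta_k=-(\beta_k/\mu_k)e^{\mathrm{i}(2k-1)\phi_1}$, so the zero set is a single circle parametrized by the free phase $\phi_1$, i.e. a single periodic orbit. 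This proves that at most one limit cycle bifurcates. For the existence statement I would exhibit an explicit choice, for instance $A=I$ (so that $\mu_k=1$) and $b=(b_1,0,\dots,0)$ with $b_1<0$, for which $r_1=\frac{2}{\pi}|b_1|>0$ and every $\mu_k\neq0$, so the reduced zero is simple; the symmetric version of the averaging theorem then yields the bifurcating limit cycle.

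The main obstacle, and the reason the statement is hedged with ``if the conditions for applying the averaging theory of first order hold,'' is the justification of averaging for the \emph{discontinuous} right--hand side: one must invoke the averaging theorem for discontinuous piecewise smooth systems, verify that the candidate periodic orbits meet the switching manifold $\{x_1=0\}$ only in transversal crossings (guaranteed here by $r_1>0$, which rules out sliding), and establish that the zero of $f$ is simple in the appropriate Brouwer--degree sense rather than through a smooth Jacobian, since $f$ is merely Lipschitz across the switching set. The Fourier evaluation of the sign integral and the block algebra are routine; the delicate point is checking these hypotheses of the discontinuous averaging theory so that the reduced simple zero genuinely persists as a hyperbolic limit cycle of \eqref{eq iniciald}.
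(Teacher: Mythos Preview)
Your Fourier computation and block--diagonalisation of $\bar A$ are correct, and they reproduce the paper's integrals $\widetilde I_j=4(-1)^{j-1}/(2j-1)$ in a clean way. The gap is in the reduction step. After writing $\zeta_1=r_1e^{i\phi_1}$ you assert that ``the first equation collapses to the single real relation $\mu_1 r_1=-\beta_1$''. But $\mu_1=\tfrac12\big((a_{11}+a_{22})+i(a_{21}-a_{12})\big)$ and $\beta_1=\tfrac{2}{\pi}(b_1+ib_2)$ are both \emph{complex}, so this is two real equations for the single real unknown $r_1$; your admissibility condition $-\beta_1/\mu_1\in\R_{>0}$ is therefore a closed, codimension--one condition on the parameters. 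For generic $A$ and $b$ your averaged function $f$ has \emph{no} zeros at all, yet the paper shows that a limit cycle exists under the open condition $b_1(a_{11}+a_{22})<0$ together with mild nondegeneracy. Your explicit example $A=I$, $b=(b_1,0,\dots,0)$ works only because it kills the imaginary parts by hand.

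The underlying reason is that the change $x=e^{A_0t}y$ turns the autonomous system into a $2\pi$--periodic one, and zeros of $f(y)$ correspond to orbits of \emph{exact} period $2\pi$. The bifurcating limit cycle has period $2\pi+O(\e)$, so it is \emph{not} a zero of $f$; it is a \emph{relative equilibrium} of the averaged vector field, i.e.\ a solution of $f(y^*)=c\,A_0y^*$ for some $c\in\R$. Working this out in block form, the $k=1$ equation becomes $\mu_1r_1+\beta_1=icr_1$: its real part is precisely the paper's $h_1=(a_{11}+a_{22})\pi r+4b_1=0$, while its imaginary part merely \emph{determines} the frequency correction $c$ and imposes no constraint on $r_1$. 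The $k\ge2$ equations then read $(\mu_k-ic(2k-1))\zeta_k=-\beta_k$, giving a unique $\zeta_k$. This is exactly what the paper achieves by taking the angle $\theta$ of the first oscillator as the new time: that choice quotients out the $S^1$--symmetry \emph{and} absorbs the unknown period into the time reparametrisation, yielding $2n-1$ equations for $2n-1$ unknowns. Your appeal to a ``symmetric version of the averaging theorem'' would have to be made precise along these lines; as written, the argument neither bounds all bifurcating limit cycles nor proves existence for an open set of data.
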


Theorem \ref{main theoremd} is proved in section \ref{s4}.

If instead of the matrix $A_0$ we consider the matrix $A_1$ where
$A_1$ is the $2n\times 2n$ matrix having on its principal diagonal
the following $2\times 2$ matrices
$$
\left(
\begin{array}{cc}
 0 & -k\\
k & 0
\end{array}
\right) \qquad \mbox{for $k=1,\ldots,n$,}
$$
and zeros in the complement, then we have the following results.

\begin{theorem}\label{main theorem1}
Assume that the conditions for applying the averaging theory of
first order hold. Then this theory does not provide any information
about the limit cycles of the continuous piecewise linear
differential system
\begin{equation}\label{eq inicial1}
\dot{x} = A_1 x  + \e \big(A x + \varphi(x_1) b\big).
\end{equation}
\end{theorem}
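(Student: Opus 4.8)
The plan is to carry out the first-order averaging for \eqref{eq inicial1} along the same lines used for Theorem \ref{main theorem}, and then to exhibit a structural obstruction produced precisely by the even frequencies present in $A_1$. First I would bring the system into the standard form for averaging. Writing the $k$-th pair of coordinates in polar form $x_{2k-1}=r_k\cos\theta_k$, $x_{2k}=r_k\sin\theta_k$, the unperturbed system $\dot x=A_1x$ becomes $\dot r_k=0$, $\dot\theta_k=k$, so that all orbits are $2\pi$--periodic and the angle $\theta_1$ can be taken as the new independent variable (since $\dot\theta_1=1+O(\e)$ when $r_1>0$). Introducing the phase differences $\psi_k=\theta_k-k\theta_1$ for $k=2,\dots,n$, the slow variables are $(r_1,\dots,r_n,\psi_2,\dots,\psi_n)$, and averaging the resulting $O(\e)$ system over $\theta_1\in[0,2\pi]$ yields a first-order averaged function $f_1$ whose simple zeros with all $r_k>0$ would detect the limit cycles.

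Next I would compute the contribution of each piece of the perturbation $Ax+\varphi(x_1)b$ to the averaged amplitude equations. The linear part $Ax$ averages block--diagonally: because $\theta_k=k\theta_1+\psi_k$ along the unperturbed flow, a product of coordinates of blocks $i$ and $j$ has zero mean over $\theta_1$ unless $i=j$, so only the diagonal $2\times2$ block $A_{kk}$ survives in block $k$, producing in the amplitude equation the radial term $\la_k r_k$, with $\la_k$ equal to half the trace of $A_{kk}$. The nonlinear part is the decisive one: the argument $x_1=r_1\cos\theta_1$ oscillates with frequency $1$, and since $\varphi$ is an odd function (see \eqref{eq phi}), the periodic function $\theta_1\mapsto\varphi(r_1\cos\theta_1)$ has a Fourier expansion containing only odd harmonics $\cos\big((2j-1)\theta_1\big)$ (it is even in $\theta_1$ and changes sign under $\theta_1\mapsto\theta_1+\pi$). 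Integrating it against the block--$k$ terms $\cos(k\theta_1+\psi_k)$ and $\sin(k\theta_1+\psi_k)$ therefore gives zero whenever $k$ is even. This is exactly where $A_1$ differs from $A_0$: for $A_0$ all frequencies $2k-1$ are odd, so the nonlinearity reaches every block, whereas for $A_1$ it is invisible to the even blocks.

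I would then draw the conclusion from an even index, which exists because $n\ge2$ (for $n=1$ one has $A_1=A_0$ and Theorem \ref{main theorem} applies). Fix $k$ even. By the previous step the averaged amplitude equation of the $k$-th block is simply $\dot r_k=\la_k r_k$, with no contribution from $\varphi$. If $\la_k\neq0$, then on the relevant region $r_k>0$ this component of $f_1$ never vanishes, so $f_1$ has no zero at all and averaging detects nothing. If instead $\la_k=0$, then $r_k$ disappears from this equation; moreover $r_k$ occurs in no other averaged equation, since the nonlinear term depends only on $r_1$ and the linear cross--blocks $A_{ij}$ with $i\neq j$ average to zero. Hence $\partial f_1/\partial r_k\equiv0$ is an identically zero column, the Jacobian of $f_1$ is everywhere singular, and $f_1$ has no simple zeros. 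In either case the first-order averaging produces no simple zero with $r_k>0$, i.e. it provides no information about the limit cycles of \eqref{eq inicial1}, which is the assertion.

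The main obstacle I expect is not the final dichotomy but the bookkeeping of the averaging step: one must verify carefully that the odd--harmonic structure of $\varphi(r_1\cos\theta_1)$ annihilates the entire block--$k$ contribution for even $k$ (both the amplitude and the phase equations), and that the variable $r_k$ genuinely decouples from the whole averaged system, including that the coupling introduced by using $\theta_1$ as time does not reintroduce $r_k$. Handling the borderline case $\la_k=0$ through the vanishing--column argument, rather than through the absence of zeros, is the other point that must be made explicit for the conclusion to be complete.
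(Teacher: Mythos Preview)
Your approach is essentially the same as the paper's: pass to the polar/phase variables, average over $\theta_1$, and use that $\varphi$ is odd so that $\varphi(r_1\cos\theta_1)$ has only odd Fourier harmonics, whence the even-frequency blocks of $A_1$ receive no contribution from the nonlinearity. The paper phrases the final obstruction slightly differently: instead of your case split on $\la_k$ and the vanishing $r_k$-column, it simply observes that once $I_j(r)=0$ for $j$ even the phase variable $\theta_{j-1}$ drops out of \emph{all} the averaged equations, so the zero set of $h$ is either empty or a continuum and hypothesis (ii) of Theorem~\ref{averaging theorem} fails. This is a bit cleaner than your dichotomy (it avoids having to check separately that $r_k$ does not reappear in the other equations or through the time-reparametrization), but the content is the same; your version is correct and your cautionary remarks about the bookkeeping are well placed.
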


\begin{theorem}\label{main theoremd1}
Assume that the conditions for applying the averaging theory of
first order hold. Then this theory does not provide any information
about the limit cycles of the discontinuous piecewise linear
differential system
\begin{equation}\label{eq inicial1d}
\dot{x} = A_1 x  + \e \big(A x + \psi(x_1) b\big).
\end{equation}

\end{theorem}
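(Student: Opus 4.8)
The plan is to put the system into the standard form for first--order averaging and then to exhibit an explicit degeneracy of the averaged function. First I would pass to polar coordinates $x_{2k-1}=r_k\cos\theta_k$, $x_{2k}=r_k\sin\theta_k$ in each of the $n$ invariant planes, $k=1,\dots,n$. Since the first block of $A_1$ has frequency $1$, the angle $\theta_1$ is strictly monotone for $|\e|$ small and may be taken as the new independent variable; this yields a system $2\pi$--periodic in $\theta_1$ of the form $dr_k/d\theta_1=\e R_k+O(\e^2)$ and $d\theta_k/d\theta_1=k+\e S_k+O(\e^2)$ for $k\ge 2$, whose unperturbed solutions are $r_k=\mathrm{const}$ and $\theta_k=k\,\theta_1+\theta_k^{0}$. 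The averaged function $\mathcal{F}$ is then the $(2n-1)$--dimensional map in the variables $(r_1,\dots,r_n,\theta_2^{0},\dots,\theta_n^{0})$ obtained by integrating $(R_1,\dots,R_n,S_2,\dots,S_n)$ over $\theta_1\in[0,2\pi]$ along these unperturbed solutions, and the averaging theorem detects a limit cycle only at a \emph{simple} zero of $\mathcal{F}$, i.e. one at which $\det D\mathcal{F}\neq 0$.

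Next I would record two orthogonality facts. For the linear part $Ax$ of the perturbation every integrand is a product of two trigonometric functions whose frequencies $j$ and $k$ come from distinct blocks; integrating over a full period of $\theta_1$ annihilates all terms with $j\neq k$, so only the diagonal blocks $A^{(k,k)}$ survive, and these contribute multiples of $r_k$ (in $\mathcal{F}_{r_k}$) or constants (in $\mathcal{F}_{\theta_k}$), with no dependence on any phase $\theta_k^{0}$. For the discontinuous part, along the unperturbed solutions one has $\psi(x_1)=\sgn(r_1\cos\theta_1)=\sgn(\cos\theta_1)$, whose Fourier expansion contains only the odd harmonics $\cos((2m+1)\theta_1)$. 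Consequently the integrals $\int_0^{2\pi}\sgn(\cos\theta_1)\cos(k\theta_1+\theta_k^{0})\,d\theta_1$ and $\int_0^{2\pi}\sgn(\cos\theta_1)\sin(k\theta_1+\theta_k^{0})\,d\theta_1$ both vanish whenever $k$ is even, and these are the only way in which $\psi$ can couple to the $k$--th block.

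Combining these observations, for every even index $k$ (one exists whenever $n\ge 2$, the only case in which $A_1\neq A_0$, e.g. $k=2$) the phase $\theta_k^{0}$ enters neither the $k$--th block of $\mathcal{F}$ --- there $\mathcal{F}_{r_k}$ reduces to a multiple of $r_k$ and $\mathcal{F}_{\theta_k}$ to a constant --- nor any other block, since the linear coupling between distinct blocks has already been averaged away. Hence $\mathcal{F}$ is independent of $\theta_k^{0}$, so $\partial\mathcal{F}/\partial\theta_k^{0}\equiv 0$, the Jacobian $D\mathcal{F}$ has an identically vanishing column, and $\det D\mathcal{F}\equiv 0$. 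Therefore $\mathcal{F}$ has no simple zeros and its zero set, if nonempty, is never isolated, so the first--order averaging theory cannot be applied and provides no information about the limit cycles of \eqref{eq inicial1d}. The obstacle I anticipate is technical rather than conceptual: one must invoke the version of the averaging theorem valid for the discontinuous right--hand side $\psi$ and verify that the reduction to $\mathcal{F}$ and the orthogonality computations survive in that setting; once this is in place, the vanishing--column argument is immediate.
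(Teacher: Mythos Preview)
Your proposal is correct and follows essentially the same approach as the paper: after passing to polar-type coordinates with $\theta_1$ as time, both arguments hinge on the fact that $\psi(r\cos\theta_1)=\sgn(\cos\theta_1)$ has only odd Fourier harmonics, so the averaged integrals $\int_0^{2\pi}\psi(r\cos\theta_1)\cos(k\theta_1)\,d\theta_1$ vanish for even $k$, the corresponding phase variables $\theta_k^{0}$ drop out of the averaged system, and the zeros (if any) form a continuum, violating the nondegeneracy hypothesis of the discontinuous averaging theorem. Your phrasing in terms of a vanishing column of $D\mathcal{F}$ is equivalent to the paper's ``continuum of zeros'' observation.
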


Theorems \ref{main theorem1} and \ref{main theoremd1} are proved in
section \ref{s5}.

Note the difference between the matrices $A_0$ and $A_1$, in the
matrix $A_0$ the non-zero entries are only the odd numbers
$1,3,\ldots,2n-1$, while in the matrix $A_1$ the non-zero entries
are the numbers $1,2,\ldots,n$. This difference provides that the
continuous and discontinuous piecewise linear differential systems
\eqref{eq inicial} and \eqref{eq iniciald} can have limit cycles
detected by the averaging theory, while for the continuous and
discontinuous piecewise linear differential systems \eqref{eq
inicial1} and \eqref{eq inicial1d} the averaging theory cannot
detect limit cycles.

According to the results of the averaging theory used it follows
that for the control differential systems here studied, the limit
cycles that we obtain bifurcate from some periodic orbit of the
$2n$-dimensional linear differential center \eqref{eq epsilon zero}.
This technique of finding limit cycles bifurcating from centers has
been intensively studied in dimension $2$, see for instance the book
of Christopher and Li \cite{CL} and the hundreds of references
quoted therein.

Other results different to the ones presented here, but which also
study the limit cycles of control systems of the form \eqref{E1}
using averaging theory, can be found in \cite{BC2, BLMT, CCL, LM, discrete}.

The main tools for proving the previous theorems are the extensions
of the classical averaging theory for computing periodic solutions
of $\mathcal C^2$ differential systems to continuous and
discontinuous differential systems. In section \ref{s2} we summarize
the extensions of the averaging theory that we shall use here for
proving our results.

\section{First order averaging theory}\label{s2}

For the classical averaging theory for finding periodic orbits of
differential systems of class $\mathcal C^2$ see for instance the
chapter 11 of the book of Verhulst \cite{Ve}.

In this section we present first the result on the continuous
averaging theory that we will use for proving our Theorems \ref{main
theorem} and \ref{main theorem1}. This theory uses the Brouwer
degree of a continuous function and its proof can be found in
\cite{Buica-Llibre-2004}.

\begin{theorem}\label{averaging theorem}
We consider the following differential system
\begin{equation}\label{eq inicial averaging}
\dot x = \e H(t,x) + \e^2R(t,x,\e),
\end{equation}
where $H:\R \times D \to \R^n$, $R:\R \times D \times (-\e_f,\e_f)
\to \R^n$ are continuous functions, $T$-periodic in the first
variable, and $D$ is an open bounded subset of $\R^n$. We define
$h:D \to \R^n$ as
\begin{equation}\label{averaged function theorem}
h(z)=\int_0^T H(s,z)ds,
\end{equation}
and assume that
\begin{itemize}
\item[(i)] $H$ and $R$ are locally Lipschitz with respect to $x$;

\item[(ii)] for $p \in D$ with $h(p)=0$, there exists a
neighborhood $V$ of $p$ such that $h(z) \neq 0$ for all $z \in
\bar{V} \setminus \{p\}$ and the Brouwer degree $d_B(h,V,0) \neq 0$.
\end{itemize}
Then, for $|\e|\neq 0$ sufficiently small, there exists an isolated
$T$- periodic solution $x(t,\e)$ of system \eqref{eq inicial
averaging}such that $x(0,\e) \to p$ as $\e \to 0$.
\end{theorem}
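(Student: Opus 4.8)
The plan is to reduce the existence of a $T$-periodic solution to a zero-finding problem for a displacement map and then invoke the homotopy invariance of the Brouwer degree. First I would fix an initial condition $z$ and let $x(t,z,\e)$ denote the solution of \eqref{eq inicial averaging} with $x(0,z,\e)=z$. The local Lipschitz hypothesis (i) guarantees existence, uniqueness, and continuous dependence of this solution on $z$ and $\e$, so on the bounded time interval $[0,T]$ the solution stays close to $z$ and the time-$T$ map $z \mapsto x(T,z,\e)$ is well defined and continuous on a neighborhood of $\bar{V}$. A $T$-periodic solution corresponds exactly to a fixed point of this map, that is, to a zero of the displacement function $F(z,\e)=x(T,z,\e)-z$.

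The next step is the asymptotic expansion of $F$. Writing the solution in integral form,
$$
x(t,z,\e)=z+\e\int_0^t H(s,x(s,z,\e))\,ds+\e^2\int_0^t R(s,x(s,z,\e),\e)\,ds,
$$
and using $x(s,z,\e)=z+O(\e)$ together with the continuity of $H$, I obtain
$$
F(z,\e)=\e\,h(z)+\e^2\,\tilde R(z,\e),
$$
where $\tilde R$ is continuous and bounded on $\bar{V}\times(-\e_f,\e_f)$. For $\e\neq0$ the zeros of $F(\cdot,\e)$ coincide with those of $g(z,\e):=h(z)+\e\,\tilde R(z,\e)$, and $g(\cdot,\e)\to h$ uniformly on $\bar{V}$ as $\e\to0$.

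Finally I would exploit hypothesis (ii). Since $h\neq0$ on the compact set $\partial V$, there is $\delta>0$ with $|h|\geq\delta$ on $\partial V$; for $|\e|$ small enough the uniform bound on $\tilde R$ keeps $g(\cdot,\e)$ nonzero on $\partial V$, so the straight-line homotopy between $h$ and $g(\cdot,\e)$ never vanishes on the boundary. By homotopy invariance of the Brouwer degree, $d_B(g(\cdot,\e),V,0)=d_B(h,V,0)\neq0$, which produces a zero $z_\e\in V$ of $g(\cdot,\e)$ and hence a $T$-periodic solution $x(t,\e)$ with $x(0,\e)=z_\e$. Any accumulation point of $z_\e$ as $\e\to0$ is a zero of $h$ in $\bar{V}$, and by assumption the only such zero is $p$, so $x(0,\e)\to p$; localizing $V$ around the isolated zero $p$ yields the isolation of the periodic solution.

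The main obstacle is that $H$ is merely continuous and locally Lipschitz rather than $\mathcal{C}^1$, so the implicit function theorem is unavailable and the classical averaging derivation via the nondegeneracy of $Dh(p)$ cannot be used; the entire argument hinges on replacing it by Brouwer degree theory, whose only inputs are continuity and nonvanishing on the boundary. A secondary technical point is the uniform control of the $O(\e^2)$ remainder $\tilde R$ on $\bar{V}$, which requires the a priori estimate that solutions starting near $\bar{V}$ remain in $D$ for all $t\in[0,T]$ and all small $\e$.
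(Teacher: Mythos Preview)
The paper does not actually prove this theorem: it merely states it as a tool and refers the reader to \cite{Buica-Llibre-2004} for the proof. So there is no ``paper's own proof'' to compare against beyond that citation.

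That said, your outline is essentially the argument of Buic\u{a}--Llibre: reduce to zeros of the displacement map, expand it as $\e h(z)+O(\e^2)$ using the integral equation, and then use homotopy invariance of the Brouwer degree together with the nonvanishing of $h$ on $\partial V$ to transfer the nonzero degree from $h$ to $g(\cdot,\e)$. One small point worth tightening: in the expansion step you invoke ``continuity of $H$'' to pass from $H(s,x(s,z,\e))$ to $H(s,z)+O(\e)$, but mere continuity does not give a uniform $O(\e)$ bound; it is precisely the local Lipschitz hypothesis (i) that yields $|H(s,x(s,z,\e))-H(s,z)|\le L\,|x(s,z,\e)-z|=O(\e)$ and hence the $O(\e^2)$ remainder. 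You flag this at the end, but it should be the stated reason in the expansion step itself. With that correction your sketch matches the cited proof.
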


\begin{remark}\label{r1}
Let $h:D \to \R^n$ be a $C^1$ function with $h(p)=0$, where $D$ is
an open bounded subset of $\R^n$ and $p \in D$. If the Jacobian of
$h$ at $p$ is not zero, then there exists a neighborhood $V$ of $p$
such that $h(z) \neq 0$ for all $z \in \bar{V} \setminus \{p\}$, and
the Brouwer degree $d_B(h,V,p) \in \{-1,1\}$.
\end{remark}

For a proof of Remark \ref{r1} see for instance \cite{Ll}.

For proving Theorems \ref{main theoremd} and \ref{main theoremd1} we
shall need the following extension of the averaging theory for
computing periodic solutions to discontinuous differential systems
done in \cite{LNT}.

\begin{theorem}\label{discontinuous}
We consider the following discontinuous differential system
\begin{equation}\label{MRs1}
x'(t)=\e H(t,x)+\e^2R(t,x,\e),
\end{equation}
with
\[
\begin{aligned}
&H(t,x)=H_1(t,x)+\sgn(g(t,x))H_2(t,x),\\
&R(t,x,\e)=R_1(t,x,\e)+\sgn(g(t,x))R_2(t,x,\e),
\end{aligned}
\]
where $H_1,H_2:\R\times D\rightarrow\R^n$, $R_1,R_2:\R\times
D\times(-\e_0,\e_0)\rightarrow\R^n$ and $g:\R\times D\rightarrow \R$
are continuous functions, $T$--periodic in the variable $t$ and $D$
is an open subset of $\R^n$. We also suppose that $g$ is a $\CC^1$
function having $0$ as a regular value.

Define the average function $h:D\rightarrow\R^n$ as
\begin{equation}\label{MRf1}
h(x)=\int_0^T H(t,x) dt.
\end{equation}
We assume the following conditions.
\begin{itemize}
\item[$(i)$] $H_1,\,H_2,\,R_1,\,R_2$ are locally Lipschitz
with respect to $x$;

\item[$(ii)$] there exists an open bounded subset $C\subset D$ such
that, for $|\e|>0$ sufficiently small, every orbit starting in $\ov
C$ reaches the set of discontinuity only at its crossing regions.

\item[$(iii)$] for $a\in C$ with $h(a)=0$, there exists a neighbourhood
$U\subset C$ of $a$ such that $h(z)\neq 0$ for all $z\in\overline{U}
\setminus \{a\}$ and $d_B(h,U,0)\neq 0$.
\end{itemize}
Then, for $|\e|>0$ sufficiently small, there exists a $T$--periodic
solution $x(t,\e)$ of system \eqref{MRs1} such that $x(0,\e)\to a$
as $\e\to 0$.
\end{theorem}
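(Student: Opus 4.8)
The plan is to reduce the search for $T$-periodic solutions of \eqref{MRs1} to finding zeros of a displacement map, and then to locate such zeros by a Brouwer degree argument, exactly as in the smooth situation of Theorem \ref{averaging theorem}; the genuinely new work lies entirely in controlling the discontinuity produced by the factor $\sgn(g(t,x))$.

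First I would settle well-posedness. Write $\Sigma=g^{-1}(0)$ for the discontinuity set, which is a regular hypersurface since $0$ is a regular value of $g$. Hypothesis $(ii)$ guarantees that for $|\e|$ small every orbit issuing from $\ov C$ meets $\Sigma$ only at crossing points, where the two one-sided vector fields $\e(H_1\pm H_2)+\e^2(R_1\pm R_2)$ point to the same side of $\Sigma$; hence no sliding occurs and the solution $x(t,z,\e)$ with $x(0,z,\e)=z$ is obtained by concatenating the smooth pieces on either side of $\Sigma$, is unique, and is defined on $[0,T]$. At a transversal crossing the scalar equation $g\big(t,x(t,z,\e)\big)=0$ has $\p_t g\neq 0$, so the implicit function theorem gives crossing times depending continuously on $(z,\e)$; piecing the smooth branches together across these times shows that $(z,\e)\mapsto x(t,z,\e)$ is continuous on $\ov C\times(-\e_0,\e_0)$.

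Next I would set up the displacement map. From the integral form
\[
x(t,z,\e)=z+\e\int_0^t H\big(s,x(s,z,\e)\big)\,ds+\e^2\int_0^t R\big(s,x(s,z,\e),\e\big)\,ds,
\]
the local Lipschitz hypothesis $(i)$ together with Gronwall's inequality yields $\|x(t,z,\e)-z\|=O(\e)$ uniformly on $[0,T]\times\ov C$. Setting $f(z,\e)=x(T,z,\e)-z$, a $T$-periodic solution corresponds to a zero of $f(\cdot,\e)$. The key estimate is
\[
\tfrac1\e f(z,\e)=h(z)+o(1)\qquad(\e\to 0),
\]
uniformly in $z\in\ov C$. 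For the $H_1,H_2$ contributions this follows by replacing $x(s,z,\e)$ with $z$ at a cost of $O(\e)$, using Lipschitz continuity. The delicate term is $\sgn\big(g(s,x(s,z,\e))\big)$ versus $\sgn\big(g(s,z)\big)$: since the orbit stays within $O(\e)$ of $z$ we have $g(s,x(s,z,\e))=g(s,z)+O(\e)$, so the two signs can disagree only where $|g(s,z)|=O(\e)$; because $s\mapsto g(s,z)$ vanishes transversally, this set of times has measure $O(\e)$, and as $H_2$ is bounded on $\ov C$ the mismatch contributes only $O(\e)$ to the integral. Hence $\tfrac1\e f(\cdot,\e)\to h$ uniformly.

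Finally I would run the degree argument. The map $\tilde f_\e:=\tfrac1\e f(\cdot,\e)$ is continuous in $z$, and by $(iii)$ we have $h\neq 0$ on $\partial U$ and $d_B(h,U,0)\neq 0$. Since $\tilde f_\e\to h$ uniformly on $\ov U$, for $|\e|$ small the straight-line homotopy between $h$ and $\tilde f_\e$ stays nonzero on $\partial U$, so homotopy invariance gives $d_B(\tilde f_\e,U,0)=d_B(h,U,0)\neq 0$. Therefore $\tilde f_\e$, and with it $f(\cdot,\e)$, has a zero $z_\e\in U$, producing a $T$-periodic solution $x(t,\e)=x(t,z_\e,\e)$; compactness of $\ov U$ and the fact that $a$ is the only zero of $h$ there force $z_\e\to a$, i.e. $x(0,\e)\to a$. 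The main obstacle throughout is precisely the discontinuity: the transversality built into hypothesis $(ii)$ is what makes the flow continuous in the initial data and what bounds the measure of the switching-time mismatch in the averaging estimate, and without it both the continuity of the displacement map and the degree argument would break down.
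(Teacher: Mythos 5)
The paper does not actually prove this theorem: it is quoted from \cite{LNT}, where the proof runs exactly along the lines you propose --- well-posedness and continuous dependence of the crossing solutions, the displacement map $f(z,\e)=x(T,z,\e)-z$ with the uniform estimate $\e^{-1}f(z,\e)=h(z)+o(1)$ obtained by showing that the set of times where the sign of $g$ along the orbit disagrees with the sign of $g$ along the frozen segment has measure $O(\e)$, and then homotopy invariance of the Brouwer degree on $\p U$. Your proposal is correct and essentially identical to that argument; the one step you assert rather than derive is that hypothesis $(ii)$ forces $s\mapsto g(s,z)$ to vanish transversally, uniformly for $z\in\ov C$ (as $\e\to0$ the crossing condition along the nearly frozen orbit degenerates to $\p_t g\neq 0$ on $g=0$, with compactness of $\ov C\times[0,T]$ supplying the uniform lower bound and the finiteness of the switching times), and that extraction from $(ii)$ is precisely the lemma doing the technical work in \cite{LNT}.
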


\section{Proof of Theorem \ref{main theorem}}\label{s3}

The main tool for proving Theorem \ref{main theorem} is the
averaging theory of first order for continuous differential systems
presented in Theorem \ref{averaging theorem}. In order to use this
theorem we need to write the differential system \eqref{eq inicial}
in the normal form \eqref{eq inicial averaging}, and for obtaining
this we need to some changes of variables.

\begin{lemma}\label{polarsystem}
Doing the change of variables $(x_1,x_2,\ldots,x_{2n}) \mapsto
(\T,r,\T_1,r_1,\ldots,\T_{n-1},r_{n-1})$ defined by
\begin{equation*}
\begin{split}
&x_1=r\cos\T, \\
&x_2=r\sin\T,\\
&x_{2j-1}=r_{j-1}\cos((2j-1)\T+\T_{j-1}), \\
&x_{2j}=r_{j-1} \sin((2j-1)\T+\T_{j-1}),
\end{split}
\end{equation*}
for $j=2,\ldots,n$ system \eqref{eq inicial} is transformed into the
system
\begin{equation}\label{polarvariables}
\begin{array}{RL}
\frac{dr}{d\T}=&\e H_1(\T,r,\T_1,r_1,\ldots,\T_{n-1},r_{n-1}) + \CO(\e^2),\\
\frac{dr_{j-1}}{d\T}=&\e
H_{2(j-1)}(\T,r,\T_1,r_1,\ldots,\T_{n-1},r_{n-1}) + \CO(\e^2),\\
\frac{d\T_{j-1}}{d\T}=&\e
H_{2j-1}(\T,r,\T_1,r_1,\ldots,\T_{n-1},r_{n-1}) + \CO(\e^2),
\end{array}
\end{equation}
where
\begin{equation*}
\begin{array}{RL}
H_1=&\sum_{l=1}^n r_{l-1} \bigg(F_{1,l}\cos \T+F_{2,l}\sin
\T\bigg)+\varphi(r \cos \T)(b_1 \cos \T + b_2 \sin \T), \\
\end{array}
\end{equation*}
and for $j=2,3,\ldots,n$ we have
\begin{equation*}
\begin{array}{RL}
H_{2(j-1)}=&\sum_{l=1}^n r_{l-1} \bigg(F_{2j-1,l} \cos
((2j-1)\T+\T_{j-1}) + F_{2j,l}\sin ((2j-1)\T+\T_{j-1}) \bigg)\\
&+\varphi(r \cos \T)\big[b_{2j-1} \cos ((2j-1)\T+\T_{j-1}) + b_{2j}
\sin ((2j-1)\T+\T_{j-1})\big],\\
\end{array}
\end{equation*}
\begin{equation*}
\begin{array}{RL}
H_{2j-1}=& \sum_{l=1}^n \frac{r_{l-1}}{r_{j-1}} \bigg( F_{2j,l}\cos
((2j-1)\T+\T_{j-1}) -F_{2j-1,l}\sin ((2j-1)\T+\T_{j-1})\bigg)\\
&+(2j-1)\sum_{l=1}^n \frac{r_{l-1}}{r} \bigg(F_{1,l}\sin
\T-F_{2,l}\cos \T \bigg)\\
&+\varphi(r \cos \T)\bigg(\frac{b_{2j}}{r_{j-1}} \cos
((2j-1)\T+\T_{j-1}) - \frac{b_{2j-1}}{r_{j-1}} \sin
((2j-1)\T+\T_{j-1})\bigg)\\
&-(2j-1)\varphi(r \cos \T) \bigg(\frac{b_2}{r}\cos \T -
\frac{b_1}{r} \sin \T \bigg),
\end{array}
\end{equation*}
with
\begin{equation*}
F_{i,l}=F_{i,l}(r,\T,\T_{l-1})=a_{i(2l-1)} \cos ((2l-1)\T +
\T_{l-1}) + a_{i(2l)} \sin ((2l-1)\T+\T_{l-1}).
\end{equation*}
We take $\e_0$ sufficiently small, $m$ arbitrarily large and
\[
D_m= \bigg\{(r,\T_1,r_1,\ldots,\T_{n-1},r_{n-1}) \in
\bigg(\frac{1}{m},m\bigg) \times \bigg[\mathbb S^1 \times
\bigg(\frac{1}{m},m\bigg)\bigg]^{n-1}\bigg\}.
\]
Then the vector field of system \eqref{polarvariables} is well
defined and continuous on $\mathbb S^1 \times D_m \times
(-\e_0,\e_0)$. Moreover the system is $2\pi$-periodic with respect
to variable $\T$ and locally Lipschitz with respect to variables
$(r,\T_1,r_1,\ldots,\T_{n-1},r_{n-1})$.
\end{lemma}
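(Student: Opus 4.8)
The plan is to perform the change of variables explicitly and read off the right--hand sides of \eqref{polarvariables}, using throughout the convention $r_0=r$ and $\T_0=0$ so that the first block ($k=1$) and the blocks $j\ge 2$ obey the same formulas. First I would differentiate the defining relations with respect to $t$. For the first block, differentiating $x_1=r\cos\T$, $x_2=r\sin\T$ and inverting the resulting linear system in $(\dot r,\dot\T)$ gives
\[
\dot r=\dot x_1\cos\T+\dot x_2\sin\T,\qquad r\dot\T=-\dot x_1\sin\T+\dot x_2\cos\T.
\]
Writing $\phi_j=(2j-1)\T+\T_{j-1}$, the same computation on the $j$--th block ($j\ge 2$) yields
\[
\dot r_{j-1}=\dot x_{2j-1}\cos\phi_j+\dot x_{2j}\sin\phi_j,\qquad r_{j-1}\dot\phi_j=-\dot x_{2j-1}\sin\phi_j+\dot x_{2j}\cos\phi_j,
\]
with $\dot\phi_j=(2j-1)\dot\T+\dot\T_{j-1}$.

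Next I would substitute the equations $\dot x_i=(A_0x)_i+\e[(Ax)_i+\varphi(x_1)b_i]$ of \eqref{eq inicial}. The block--diagonal form of $A_0$ makes the $\CO(1)$ parts collapse: in $\dot r$ the unperturbed terms $-x_2\cos\T+x_1\sin\T$ sum to zero, one finds $r\dot\T=r+\CO(\e)$ and $r_{j-1}\dot\phi_j=(2j-1)r_{j-1}+\CO(\e)$, and hence $\dot\T=1+\CO(\e)$ and $\dot\T_{j-1}=\dot\phi_j-(2j-1)\dot\T=\CO(\e)$. This last cancellation is exactly where the commensurability of the frequencies $1,3,\ldots,2n-1$ and the choice of the angle $\phi_j$ enter. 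Using $(Ax)_i=\sum_{l=1}^n r_{l-1}F_{i,l}$, which is just the definition of $F_{i,l}$ once $r_0=r$, $\T_0=0$ are adopted, together with $\varphi(x_1)=\varphi(r\cos\T)$, the $\CO(\e)$ coefficients of $\dot r$ and $\dot r_{j-1}$ are already the asserted $H_1$ and $H_{2(j-1)}$.

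To reach \eqref{polarvariables} I would pass from $t$ to $\T$ as the independent variable, which is legitimate for $|\e|$ small since $\dot\T=1+\CO(\e)\neq 0$. Dividing $\dot r,\dot r_{j-1},\dot\T_{j-1}$ by $\dot\T$ and using $1/\dot\T=1+\CO(\e)$, the leading $\CO(\e)$ coefficient of each quotient equals that of its numerator, the denominator only affecting the $\CO(\e^2)$ remainder; the two radial quotients thus reproduce $H_1$ and $H_{2(j-1)}$. For the angular quotient I would expand $\dot\T_{j-1}=\dot\phi_j-(2j-1)\dot\T$: solving $r_{j-1}\dot\phi_j=\cdots$ for $\dot\phi_j$ supplies the terms of $H_{2j-1}$ carrying the factor $1/r_{j-1}$, while solving $r\dot\T=\cdots$ for $\dot\T$ and multiplying by $-(2j-1)$ supplies the terms carrying $(2j-1)/r$, reproducing the stated formula. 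The main obstacle is purely the bookkeeping in this last equation, where contributions from two different blocks and the correct signs of the trigonometric combinations must be tracked.

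Finally I would check the regularity claims on $\s^1\times D_m\times(-\e_0,\e_0)$. On $D_m$ one has $r,r_{j-1}\in(1/m,m)$, so these radii are bounded away from $0$ and the divisions by $r$ and $r_{j-1}$ are harmless, while shrinking $\e_0$ keeps $\dot\T>0$; thus the right--hand sides are well defined. Every term is a product of the functions $\cos\T,\sin\T,\cos\phi_j,\sin\phi_j$, which are smooth and, since $2j-1\in\Z$, are $2\pi$--periodic in $\T$, with the factor $\varphi(r\cos\T)$. Because $\varphi$ is continuous and globally Lipschitz with constant $1$, the composition $\varphi(r\cos\T)$ is continuous, $2\pi$--periodic in $\T$, and locally Lipschitz in $(r,\T_1,r_1,\ldots,\T_{n-1},r_{n-1})$. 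Therefore the whole vector field of \eqref{polarvariables} is continuous, $2\pi$--periodic in $\T$, and locally Lipschitz in the remaining variables, as claimed.
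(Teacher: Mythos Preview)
Your argument is correct and follows essentially the same route as the paper: rewrite \eqref{eq inicial} in the variables $(\T,r,\T_1,r_1,\ldots,\T_{n-1},r_{n-1})$, observe that $\dot\T=1+\CO(\e)>0$ on $\mathbb S^1\times D_m$ for $|\e|$ small, reparametrize by $\T$, and Taylor--expand in $\e$. The paper simply records the transformed system and invokes the Taylor expansion, whereas you carry out the block--by--block computation and the regularity check explicitly; the content is the same.
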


\begin{proof}
In the variables $(\T,r,\T_1,r_1,\ldots,\T_{n-1},r_{n-1})$ the
differential system \eqref{eq inicial} becomes
\begin{equation*}
\begin{array}{RL}
\dot{\T}=&1+\dfrac{\e}{r}\bigg[\sum_{l=1}^n r_{l-1}
\bigg(F_{2,l}\cos \T -F_{1,l} \sin \T\bigg)+\varphi(r \cos
\T)(b_2\cos \T - b_1 \sin
\T)\bigg],  \\
\dot{r}=&\e H_1(\T,r,\T_1,r_1,\ldots,\T_{n-1},r_{n-1}),\\
\dot{r}_{j-1}=&\e  H_{2(j-1)}(\T,r,\T_1,r_1,\ldots,\T_{n-1},r_{n-1}),\\
\dot{\T}_{j-1}=&\e  H_{2j-1}(\T,r,\T_1,r_1,\ldots,\T_{n-1},r_{n-1}),
\end{array}
\end{equation*}
for $j=2,3,\ldots,n$. Note that for $\e = 0$ , $\dot \T (t)>0$ and
hence for $|\e| \neq 0$ sufficiently small this property remains
valid for each $t$ when $(\T,r,\T_1,r_1,\ldots,\T_{n-1},r_{n-1}) \in
\mathbb{S}^1 \times D_m$. Now we take $\T$ as the new independent
variable. The right-hand side of the new system is well defined and
continuous in $\mathbb S^1 \times D_m \times (-\e_0,\e_0)$ and it is
$2\pi$-periodic with respect to the new variable $\T$ and locally
Lipschitz with respect to $(r,\T_1,r_1,\ldots,\T_{n-1},r_{n-1})$.
Now system \eqref{polarsystem} can be obtained doing a Taylor series
expansion in the parameter $\e$ around $\e=0$.
\end{proof}

The next step is to find the corresponding average function
\eqref{averaged function theorem} of system \eqref{polarsystem} that
we denoted by $h=(h_1,h_2,\ldots,h_{2(n-1)},h_{2n-1}):D_m \to
\R^{n-1}$ and it is defined by
\begin{equation*}
\begin{array}{RL}
h_1=&h_1(r,\T_1,r_1,\ldots,\T_{n-1},r_{n-1})=\int_{0}^{2\pi}
H_1(r,\T_1,r_1,\ldots,\T_{n-1},r_{n-1}) d\T,\\
h_{2(j-1)}=&h_{2(j-1)}(r,\T_1,r_1,\ldots,\T_{n-1},r_{n-1})=\int_{0}^{2\pi}
H_{2(j-1)}(r,\T_1,r_1,\ldots,\T_{n-1},r_{n-1}) d\T,\\
h_{2j-1}=&h_{2j-1}(r,\T_1,r_1,\ldots,\T_{n-1},r_{n-1})=\int_{0}^{2\pi}
H_{2j-1}(r,\T_1,r_1,\ldots,\T_{n-1},r_{n-1}) d\T,
\end{array}
\end{equation*}
for $j=1,2,\ldots,n$. To calculate these integrals we will use the
following equalities
\begin{equation*}
\begin{array}{RL}
&\int_0^{2\pi} \cos((2j-1)\T+\T_{j-1})\sin((2l-1)\T+\T_{l-1})d\T=0
\qquad \text{for all integers $l,j>1$},\\ &\int_0^{2\pi} \cos((2j-1)\T+\T_{j-1})
\cos((2l-1)\T+\T_{l-1})d\T =\begin{cases}
\pi & \text{if $l=j$,}\\
0 & \text{if $l\neq j$,}
\end{cases}\\
\end{array}
\end{equation*}
\begin{equation*}
\begin{array}{RL}
&\int_0^{2\pi} \sin((2j-1)\T+\T_{j-1})\sin((2l-1)\T+\T_{l-1})d\T =
\begin{cases}
\pi & \text{if $l=j$,}\\
0 & \text{if $l\neq j$,}
\end{cases}\\
\end{array}
\end{equation*}
and the next lemma.

For $r>0$ and $j=1,2,\ldots,n$ we denote
\begin{equation*}
\begin{array}{RL}
I_j(r)=&\int_0^{2\pi} \varphi(r \cos \T) \cos((2j-1)\T) d\T,\\
J_j(r)=&\int_0^{2\pi} \varphi(r \cos \T) \sin((2j-1)\T) d\T,\\
\end{array}
\end{equation*}
where $\varphi$ is the piecewise linear function \eqref{eq phi}.

\begin{lemma}\label{lemma4}
The integrals $I_j$ and $J_j(r)$ satisfy
\begin{equation*}
\begin{array}{RL}
I_j(r)=&\begin{cases}
\pi r & \text{if $j=1$ and $0<r\leq 1$},\\
0 & \text{if $j>1$ and $0<r\leq 1$},\\
K(r) & \text{if $j=1$ and $r>1$},\\
L_j(r)& \text{if $j>1$ and $r>1$};
\end{cases}\\
J_j(r)=&0 \quad \quad \text{for all $j=1,2,\ldots,n$ and $r>0$.}
\end{array}
\end{equation*}
where
\begin{equation*}
\begin{array}{RL}
L_j(r)=&\frac{2}{j(2j-1)^2}\bigg((2j-1) \sqrt{-1+r^2} \cos ((2j-1)
\arctan \sqrt{-1+r^2})\\
&- \sin ((2j-1) \arctan \sqrt{-1+r^2})\bigg),\\
K(r)=&\pi r + \dfrac{2}{r} \sqrt{r^2-1}-2r \arctan(\sqrt{r^2-1}).
\end{array}
\end{equation*}
\end{lemma}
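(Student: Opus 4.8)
The plan is to evaluate the four integrals directly, exploiting the symmetries of the integrands and then splitting the domain according to the three branches of $\varphi$. First I would dispose of $J_j$ and of the regime $0<r\le 1$ for $I_j$. Since $\cos(2\pi-\T)=\cos\T$, the factor $\varphi(r\cos\T)$ is invariant under the reflection $\T\mapsto 2\pi-\T$, whereas $\sin((2j-1)\T)$ changes sign under it; hence the integrand of $J_j$ is odd about $\T=\pi$ and $J_j(r)=0$ for every $j$ and every $r>0$. For $0<r\le 1$ one has $|r\cos\T|\le r\le 1$ for all $\T$, so $\varphi(r\cos\T)=r\cos\T$ throughout and $I_j(r)=r\int_0^{2\pi}\cos\T\cos((2j-1)\T)\,d\T$; by orthogonality of the trigonometric system this is $\pi r$ when $2j-1=1$ (i.e. $j=1$) and $0$ when $j>1$, matching the first two cases.

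The substantive case is $r>1$. Here I would introduce the switching angle $\al=\arccos(1/r)\in(0,\pi/2)$, so that $\cos\al=1/r$, $\sin\al=\sqrt{r^2-1}/r$ and therefore $\tan\al=\sqrt{r^2-1}$, giving the identity $\al=\arctan\sqrt{r^2-1}$ that already reveals the arctangent appearing in $K$ and $L_j$. Using again the reflection $\T\mapsto 2\pi-\T$ (under which $\cos((2j-1)\T)$ is invariant) I reduce to $I_j(r)=2\int_0^{\pi}\varphi(r\cos\T)\cos((2j-1)\T)\,d\T$, and split $[0,\pi]$ into the three intervals on which $\varphi(r\cos\T)$ equals $1$, $r\cos\T$ and $-1$, namely $[0,\al]$, $[\al,\pi-\al]$ and $[\pi-\al,\pi]$. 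Because $2j-1$ is odd one has $\cos((2j-1)(\pi-\T))=-\cos((2j-1)\T)$, so the contributions of the two constant branches coincide and together give $2\int_0^{\al}\cos((2j-1)\T)\,d\T$; on the middle branch I apply the product-to-sum identity $\cos\T\cos((2j-1)\T)=\tfrac12\big(\cos(2j\T)+\cos(2(j-1)\T)\big)$.

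It then remains to evaluate elementary integrals and simplify. The endpoint values are governed by $\sin(2j(\pi-\al))=-\sin(2j\al)$ and $\sin(2(j-1)(\pi-\al))=-\sin(2(j-1)\al)$, which collapse each cosine integral over $[\al,\pi-\al]$ to a single sine term. For $j=1$ the factor $\cos(2(j-1)\T)$ degenerates to $1$, producing the linear-in-$r$ and arctangent contributions that assemble into $K(r)=\pi r+\tfrac{2}{r}\sqrt{r^2-1}-2r\arctan(\sqrt{r^2-1})$ after inserting $\cos\al=1/r$ and $\sin\al=\sqrt{r^2-1}/r$. For $j>1$ I expand $\sin(2j\al)$ and $\sin(2(j-1)\al)$ by the addition formula around $(2j-1)\al$, combine the three resulting sine terms of frequencies $2j-1$, $2j$ and $2(j-1)$, and use $r\cos\al=1$, $r\sin\al=\sqrt{r^2-1}$ to reach the compact combination $(2j-1)\sqrt{r^2-1}\cos((2j-1)\al)-\sin((2j-1)\al)$, i.e. the numerator of $L_j(r)$. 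I expect the main obstacle to be precisely this last consolidation: keeping the bookkeeping of the three distinct frequencies straight, handling the degenerate $j=1$ term separately, and verifying that the rational prefactors combine to the stated coefficient; a numerical spot check (for instance $j=2$, $r=2$, where $\al=\pi/3$) is advisable to pin down the constant before committing to the closed form.
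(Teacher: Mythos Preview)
Your proposal is correct and follows essentially the same route as the paper: split into the regimes $0<r\le1$ and $r>1$, introduce the switching angle (the paper's $\T_c$ is your $\al$), decompose the integral over the branches of $\varphi$, and evaluate the resulting elementary trigonometric integrals. Your symmetry shortcuts---using $\T\mapsto 2\pi-\T$ to dispose of $J_j$ and to halve the domain for $I_j$, and $\T\mapsto\pi-\T$ to merge the two constant branches---are a mild streamlining of the paper's direct five-interval split over $[0,2\pi]$, but the underlying argument is the same.
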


\begin{proof}
We consider two cases: $0<r\leq 1$ and $r>1$.

\noindent\textbf{Case 1: $0<r\leq 1$} In this case $|r \cos \T| \leq
1 $ and hence $\varphi(r \cos \T)=r \cos \T$ for all $\T \in
[0,2\pi]$. Then if $j=1$
\[
\int_0^{2\pi} \varphi(r \cos \T) \cos\T d\T=r\int_0^{2\pi} \cos^2 \T
d\T=\pi r,
\] and
\[
\int_0^{2\pi} \varphi(r \cos \T) \sin\T d\T=r\int_0^{2\pi} \cos \T
\sin \T d\T=0.
\] And if $j>1$ then
\[
\int_0^{2\pi} \varphi(r \cos \T) \cos((2j-1)\T) d\T=r\int_0^{2\pi}
\cos \T\cos((2j-1)\T) d\T=0,
\]

\[
\int_0^{2\pi} \varphi(r \cos \T) \sin((2j-1)\T) d\T=r\int_0^{2\pi}
\cos \T\sin((2j-1)\T) d\T=0.
\]

\noindent\textbf{Case 2: $r>1$} In this case choose $\T_c \in
(0,\pi/2)$ such that $\cos \T_c=1/r$. If $j=1$ we have
\begin{equation*}
\begin{array}{RL}
I_1(r)=&\int_{0}^{\T_c} \cos \T  d\T + r \int_{\T_c}^{\pi-\T_c}
\cos^2 \T  d\T - \int_{\pi-\T_c}^{\pi+\T_c} \cos \T  d\T\\
&+ r \int_{\pi+\T_c}^{2\pi-\T_c} \cos^2 \T  d\T +
\int_{2\pi-\T_c}^{2\pi} \cos \T  d\T\\
=&\pi r + \frac{2}{r}
\sqrt{r^2-1}-2r \arctan(\sqrt{r^2-1}).
\end{array}
\end{equation*}
The same reasoning can be applied to see that $J_1(r)=0$. If $j>1$ then
\begin{equation*}
\begin{array}{RL}
I_j(r)=&\int_{0}^{\T_c} \cos ((2j-1)\T)  d\T + r
\int_{\T_c}^{\pi-\T_c} \cos \T  \cos((2j-1)\T) d\T -
\int_{\pi-\T_c}^{\pi+\T_c} \cos((2j-1)\T)  d\T\\
&+ r \int_{\pi+\T_c}^{2\pi-\T_c} \cos \T  \cos ((2j-1)\T) d\T +
\int_{2\pi-\T_c}^{2\pi} \cos ((2j-1)\T)  d\T\\
=&\frac{2}{j(2j-1)^2}\bigg((2j-1) \sqrt{-1+r^2} \cos ((2j-1) \arctan
\sqrt{-1+r^2})\\ &- \sin ((2j-1) \arctan \sqrt{-1+r^2})\bigg),
\end{array}
\end{equation*} and $J_j(r)=0$.
\end{proof}

With the results presented previously we are able to prove Theorem
\ref{main theorem}. Since we can choose $m$ sufficiently large to
find the zeroes of the average function $h$ in $D_m$ it is
sufficient to look for them in $(0,\infty) \times [\mathbb S^1
\times (0,\infty)]^{n-1}$. To calculate the expression of the
average function we consider again two cases.

\noindent{\bf Case} 1: $0<r\leq 1$. In this case the system whose
zeros can provide limit cycles of system \eqref{eq inicial} is
\begin{equation}\label{jiji}
\begin{array}{RL}
h_1=&(a_{11} + a_{22}+b_1)\pi r,\\
h_2=&(a_{33} + a_{44})\pi r_{1},\\
h_3=&(a_{43} - a_{34} +3(a_{12}-a_{21}-b_2))\pi,\\
&\vdots\\
h_{2(n-1)}=&(a_{(2n-1)(2n-1)} + a_{(2n)(2n)}))\pi r_{n-1},\\
h_{2n-1}=&(a_{(2n)(2n-1)} - a_{(2n-1)(2n)}
+(2n-1)(a_{12}-a_{21}-b_2)\pi.
\end{array}
\end{equation}
Note that the variables $\T_1,\T_2,\ldots,\T_{n-1}$ does not appear
explicitly into system \eqref{jiji}. Hence, if this system has
zeros, it has a continuum of zeros. Therefore the assumption (ii) of
the averaging theory, presented in Theorem \ref{averaging theorem},
is not satisfied and this theorem does not provide any information
about the limit cycles of system \eqref{polarvariables}.

\noindent{\bf Case} 2: $r>1$. Now the system whose zeros can provide
limit cycles of system \eqref{polarvariables} is
\begin{equation}\label{case r>1}
\begin{array}{RL}
h_1=&(a_{11} + a_{22})\pi r + b_1 K(r),\\
h_{2}=&(a_{33} + a_{44})\pi r_{1} + (b_3 \cos \T_{1} + b_{4}\sin \T_{2}) L_2(r),\\
h_{3}=&(a_{43} - a_{34} +3(a_{12}-a_{21}))\pi -\\
&\frac{3 b_2 r_{1}K(r) -r (b_{4}\cos \T_{1} - b_{3}\sin \T_{1})
L_{2}(r)}
{r r_1},\\
&\vdots\\
h_{2(n-1)}=&(a_{(2n-1)(2n-1)} + a_{(2n)(2n)})\pi r_{n-1} + \\
& (b_{2n-1} \cos \T_{n-1} + b_{2n}\sin \T_{n-1}) L_{n}(r),\\
h_{2n-1}=&(a_{(2n)(2n-1)} - a_{(2n-1)(2n)} +(2n-1)(a_{12}-a_{21}))\pi -\\
&\frac{(2n-1) b_2 r_{n-1}K(r) -r (b_{2n}\cos \T_{n-1} - b_{2n-1}\sin
\T_{n-1}) L_{n}(r)}{r r_{n-1}},
\end{array}
\end{equation}
For each $j\in \{2,3,\ldots,n\}$ we will study the zeros of the
system
\begin{equation*}\label{arbitrary system}
\begin{array}{RL}
h_1=&(a_{11} + a_{22})\pi r + b_1 K(r),\\
h_{2(j-1)}=&(a_{(2j-1)(2j-1)} + a_{(2j)(2j)})\pi r_{j-1} +\\
& (b_{2j-1}\cos \T_{j-1} + b_{2j}\sin \T_{j-1}) L_{j}(r),\\
h_{2j-1}=&(a_{(2j)(2j-1)} - a_{(2j-1)(2j)} +(2j-1)(a_{12}-a_{21}))\pi- \\
&\frac{(2j-1) b_2 r_{j-1}K(r) -r (b_{2j}\cos \T_{j-1} - b_{2j-1}\sin
\T_{j-1}) L_{j}(r)}{r r_{j-1}},
\end{array}
\end{equation*}

\noindent{\bf Claim}: {\it The function $K:(1,\infty) \to (\pi,4)$
is a diffeomorphism}. Indeed note that $K$ is twice differentiable
with
\[
K'(r)=\pi-2\frac{\sqrt{r^2-1}}{r^2}-2 \arctan \sqrt{r^2-1},
\]
and
\[
K''(r)=-\frac{4}{r^3 \sqrt{r^2-1}}<0
\]
which implies that $K'$ is a strictly decreasing function. Moreover
$\lim_{r\to \infty} K'(r)=0$ what means that $K'(r)$ has a
horizontal asymptote given by the axis $r$ and then $K'(r) \geq 0$.
Suppose that there exists an $r_0 \in (1,\infty)$ such that
$K'(r_0)=0$. Then for all $r>r_0$ we have $K'(r)<K'(r_0)=0$,
contradiction. Therefore it follows that $K'(r) \neq 0$ for all $r
\in (1,\infty)$ and the Inverse Function Theorem guarantees that $K$
is a local diffeomorphism and since that $K$ is a injective function
we obtain the global diffeomorphism, ending the proof of this claim.

First we note that in order that the equation $h_1=0$ has solutions
with $r>1$ it is necessary that $b_1(a_{11}+a_{22})<0$. Moreover
$K''(r)<0$ implies that the graph of $K$ is convex. In the plane of
the graph of $K(r)$ the graph of $(a_{11}+a_{22})\pi r$ is a
straight line passing through the origin and then both graphs can
intersect at most in two points.

\begin{figure}[htbp]
\centerline{\includegraphics [width=5cm]{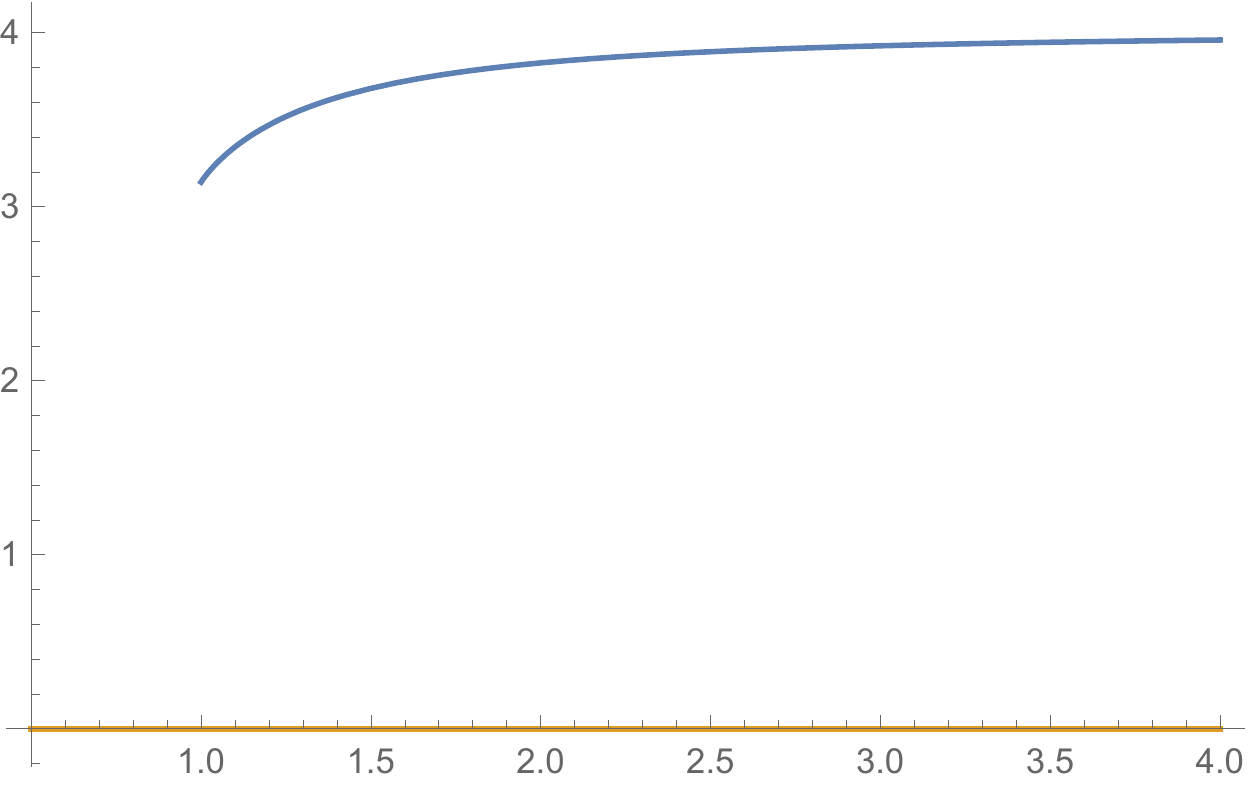}}\centerline {}
\caption{The graphic of the function $K(r)$.}
\label{figure}\centerline{}
\end{figure}

But if some straight line intercept the graph of $K(r)$ in two
points then it cannot pass through the origin, as we can see in
Figure \ref{figure}. Then the equation $h_1=0$ has at most one
solution if $r>1$, and since that $K(r)$ is a diffeomorphism we can
choose the coefficients $a_{11}, a_{22}$ and $b_1$ so that this
solution exists. We denote this solution by $r_0$ and we substitute
it into the equations $h_{2(j-1)}=0$ and $h_{2j-1}=0$. Defining
\begin{equation*}
\begin{array}{RL}
A_j=&(a_{(2j-1)(2j-1)} + a_{(2j)(2j)})\pi, \quad \quad B_j=b_{2j-1}
L_{j}(r_0), \quad \quad C_j=b_{2j} L_{j}(r_0),\\
D_j=& (a_{(2j)(2j-1)} - a_{(2j-1)(2j)}
+(2j-1)(a_{12}-a_{21}))\pi-\frac{1}{r_{0}}(2j-1) b_2 K(r_0),\\
u_j=&\cos \T_{j-1}, \quad \quad v_j=\sin \T_{j-1},
\end{array}
\end{equation*}
the system $h_{2(j-1)}=h_{2j-1}=0$ is equivalent to the system
\begin{equation*}
\begin{array}{RL}
A_j r_{j-1} + B_j u_j + C_j v_j = & 0,\\
D_j r_{j-1} + C_j u_j - B_j v_j= & 0,\\
u_j^2+v_j^2-1=&0.
\end{array}
\end{equation*}
Using the two first equations we obtain
\[
u_j=-\frac{(A_j B_j + C_j D_j) r_{j-1}}{B_j^2+C_j^2},\quad \quad
v_j=\frac{(B_j D_j -A_j C_j) r_{j-1}}{B_j^2+C_j^2}.
\]
Substituting these two expressions in the third equation we get
\[
(A_j^2+D_j^2)r_{j-1}^2-B_j^2-C_j^2=0.
\]
Therefore at most there is one solution $r_{j-1}>0$, which provide a
unique $u_j$ and $v_j$. Since we fixed an arbitrarily $j$ to solve
this system, the same reasoning can be applied to each pair of
equations $h_{2(j-1)}=0$ and $h_{2j-1}=0$, concluding that system
\eqref{case r>1} has at most one solution. Moreover taking
conveniently the parameters of the initial system \eqref{eq inicial}
this solution exists and its Jacobian is not zero. Hence at most one
limit cycle can bifurcate from the periodic orbits of the center of
system \eqref{eq epsilon zero} when we perturbe it as in system
\eqref{eq inicial}, and there are systems for which a such limit
cycles exist. This completes the proof of Theorem \ref{main
theorem}.

Now we present an explicit example of a continuous piecewise linear
differential system \eqref{eq inicial} in $\R^4$, and repeating for
it the proof of Theorem \ref{main theorem} we will show it has one
limit cycle. Consider the following differential system
\begin{equation}\label{example}
\dot{x} = A_0 x  + \e (A x + \varphi(x_1) b),
\end{equation}
where
\begin{equation*}
A_0=\left(
\begin{array}{cccc}
 0 & -1 & 0 & 0 \\
 1 & 0 & 0 & 0 \\
 0 & 0 & 0 & -3 \\
 0 & 0 & 3 & 0
\end{array}
\right),  A=\left(
\begin{array}{cccc}
 2 & 1 & 0 & 0 \\
 0 & 2 & 0 & 0 \\
 0 & 0 & -2 & -1 \\
 0 & 0 & 0 & \dfrac{18 \pi -\sqrt{3}}{9 \pi }
\end{array}
\right), b=\left(
\begin{array}{c}
-\dfrac{24 \pi }{3 \sqrt{3}+2 \pi }\\
1\\
\dfrac{9(3-2\sqrt{3} \pi)}{2}\\
-1
\end{array}
\right).
\end{equation*}
Doing the change or variables $x_1=r \cos \T$, $x_2=r \sin \T$,
$x_3=r_1 \cos(3\T+\T_1)$, $x_4$ $=r_1 \sin(3\T+\T_1)$ and taking
$\T$ as the new independent variable we obtain the system
\begin{equation}\label{example polar}
\begin{array}{RL}
r'(\T)=&\frac{dr}{d\T}=\e H_1(\T,r,\T_1,r_1) + \CO(\e^2),\\
r_1'(\T)=&\frac{dr_1}{d\T}=\e H_2(\T,r,\T_1,r_1) + \CO(\e^2),\\
\T_1'(\T)=&\frac{d\T_1}{d\T}=\e H_3(\T,r,\T_1,r_1) + \CO(\e^2),
\\ \vspace{0.2cm}
H_1(\T,r,\T_1,r_1)=&2r +\varphi(r \cos \T)  \sin \T+\cos \T \bigg(r
\sin \T-\frac{24 \pi  \varphi(r \cos \T) }{3 \sqrt{3}+2 \pi
}\bigg),\\ \vspace{0.2cm}
H_2(\T,r,\T_1,r_1)=&-\frac{1}{18\pi}\bigg(18 \pi  \varphi(r \cos \T)
\sin (3 \T +\T_1)+9 \pi  r_1 \sin (2 (3 \T +\T_1))\\
&+\sqrt{3} r_1+81 \pi  (2 \sqrt{3} \pi -3) \varphi(r \cos \T)  \cos
(3 \T+\T_1)-\\
&(\sqrt{3}-36 \pi ) r_1 \cos (2 (3 \T +\T_1))\bigg),\\
\vspace{0.2cm} H_3(\T,r,\T_1,r_1)=&\sin^2(3 \T +\T_1)+2 \sin (2 (3
\T +\T_1))-\frac{\sin (2 (3 \T +\T_1))}{6 \sqrt{3} \pi }+3
\sin^2\T\\
&-\frac{72 \pi  \varphi(r \cos \T)  \sin \T}{3 \sqrt{3} r+2 \pi
r}-\frac{\varphi(r \cos \T)  \cos (3 \T +\T_1)}{r_1}-\frac{3
\varphi(r \cos \T)  \cos \T}{r}\\
&+\frac{9 \pi  \sqrt{3} \varphi(r \cos \T)  \sin (3 \T
+\T_1)}{r_1}-\frac{27 \varphi(r \cos \T)  \sin (3 \T +\T_1)}{2 r_1}.
\end{array}
\end{equation}

After some computations the average function $h=(h_1,h_2,h_3)$
defined in \eqref{averaged function theorem} is
\begin{equation*}
\begin{array}{RL}
h_1(r,\T_1,r_1)=&4 \pi  r-\frac{24 \pi}{3 \sqrt{3}+2 \pi} \bigg(\pi
r + \frac{2 \sqrt{r^2-1}}{r}-2 r \arctan(\sqrt{r^2-1})\bigg),\\
\vspace{0.2cm} h_2(r,\T_1,r_1)=&\frac{\sqrt{3}}{3} \sin
\T_1+\frac{3}{2} (2
\sqrt{3} \pi -3) \sqrt{3} \cos \T_1-\frac{\sqrt{3}}{9}r_1,\\
\vspace{0.2cm} h_3(r,\T_1,r_1)=&\frac{9 \sqrt{3} \sin \T_1}{2
r_1}-\frac{9 \pi \sin \T_1}{r_1}+\frac{\sqrt{3} \cos \T_1}{3
r_1}-\frac{3}{2}\bigg(\sqrt{3}+\frac{2 \pi }{3}\bigg)+4 \pi.
\end{array}
\end{equation*}
In order to solve the system $h_1=h_2=h_3=0$ we can use the same
reasoning applied in the proof of Theorem \ref{main theorem}
obtaining that $(r^*,\T_1^*,r_1^*)=(2,\pi/2,3)$ is a zero of the
average function. Moreover if $J=J(r,\T_1,r_1)$ is the Jacobian
matrix of $h$, then $\det J(2,\pi/2,3) \neq 0$ which implies that we
have a simple zero. By Theorem \ref{averaging theorem} system
\eqref{example polar} and consequently system \eqref{example} has
one limit cycle for $|\e|>0$ sufficiently small.

\section{Proof of Theorem \ref{main theoremd}}\label{s4}

This section is devoted to prove Theorem \ref{main theoremd}.
According to Theorem \ref{discontinuous} the same kind of arguments
used for proving Theorem \ref{main theorem} can be applied to the
discontinuous system \eqref{eq iniciald}, obtaining that the average
function $h=(h_1,h_2,\ldots,h_{2(n-1)},h_{2n-1}):D_m \to \R^{n-1}$
defined in \eqref{MRf1} is
\begin{equation}\label{ja}
\begin{array}{RL}
h_1=&(a_{11} + a_{22})\pi r + b_1 \widetilde{I}_1,\\
h_{2(j-1)}=&(a_{(2j-1)(2j-1)} + a_{(2j)(2j)})\pi r_{j-1} + (b_{2j-1}
\cos \T_{j-1} + b_{2j}\sin \T_{j-1}) \widetilde{I}_j,\\
h_{2j-1}=&(a_{(2j)(2j-1)} - a_{(2j-1)(2j)} +(2j-1)(a_{12}-a_{21}))\pi- \\
&\frac{(2j-1) b_2 r_{j-1}\widetilde{I}_1 -r (b_{2j}\cos \T_{j-1} -
b_{2j-1}\sin \T_{j-1}) \widetilde{I}_j}{r r_{j-1}},
\end{array}
\end{equation}
for $j=2,3,\ldots,n$, where
\begin{equation*}
\widetilde{I}_j=\begin{cases}
-\dfrac{4}{(2j-1)} & \text{if $j$ is even},\\
\dfrac{4}{(2j-1)} & \text{if $j$ is odd}.
\end{cases}
\end{equation*}

In fact if we define \begin{equation*}
\begin{array}{RL}
\widetilde{I}_j=&\int_0^{2\pi} \psi(r \cos \T) \cos((2j-1)\T) d\T,\\
\widetilde{J}_j=&\int_0^{2\pi} \psi(r \cos \T) \sin((2j-1)\T) d\T,
\end{array}
\end{equation*}
where $\psi$ is the piecewise linear function given by \eqref{eq phi
0}. Then we have that
\begin{equation*}
\begin{array}{RL}
\widetilde{I}_j=&\int_0^{2\pi} \psi(r \cos \T) \cos((2j-1)\T) d\T\\
=&\int_0^{\pi/2}  \cos((2j-1)\T) d\T-\int_{\pi/2}^{3\pi/2}
\cos((2j-1)\T) d\T + \int_{3\pi/2}^{2\pi}  \cos((2j-1)\T) d\T\\
=&-\frac{4}{(2j-1)}\cos(j \pi),
\end{array}
\end{equation*} and \begin{equation*}
\begin{array}{RL}
\widetilde{J}_j=&\int_0^{2\pi} \psi(r \cos \T) \sin((2j-1)\T) d\T\\
=&\int_0^{\pi/2}  \sin((2j-1)\T) d\T-\int_{\pi/2}^{3\pi/2}
\sin((2j-1)\T) d\T + \int_{3\pi/2}^{2\pi}  \sin((2j-1)\T) d\T\\
=&-\frac{4}{(2j-1)}\sin(2j\pi)\cos(j\pi)=0.
\end{array}
\end{equation*}

Note that $\widetilde{I}_j$ is a constant real number different from
zero, and hence $h_1$ is a straight line, and then system \eqref{ja}
has at most one positive zero. Moreover if we choose conveniently
the coefficients $b_1$ $a_{11}$ and $a_{22}$ we can find a simple
positive zero of system \eqref{ja}. This completes the proof of
Theorem \ref{main theoremd}.

\section{Proof of Theorems \ref{main theorem1} and \ref{main theoremd1}}\label{s5}

Doing the change of coordinates
\begin{equation*}
\begin{split}
&x_1=r\cos\T, \quad \quad x_2=r\sin\T,\\
&x_{2j-1}=r_{j-1}\cos(j\T+\T_{j-1}), \quad \quad x_{2j}=r_{j-1}
\sin(j\T+\T_{j-1}) \quad j \in \{2,3,\ldots,n\},
\end{split}
\end{equation*}
for $j=2,3,\ldots,n$, to the continuous piecewise linear
differential system \eqref{eq inicial1}, and working as in the proof
of Theorem \ref{main theorem} we obtain that the average function
$h=(h_1,h_2,\ldots$, $h_{2n-1})$  defined in \eqref{averaged
function theorem} now is given by
\begin{equation}
\label{arbitrary system 2}
\begin{array}{RL}
h_1=&(a_{11} + a_{22})\pi r + b_1 I_1(r),\\
h_{2(j-1)}=&(a_{(2j-1)(2j-1)} + a_{(2j)(2j)})\pi r_{j-1} + (b_{2j-1}
\cos \T_{j-1} + b_{2j}\sin \T_{j-1}) I_{j}(r),\\
h_{2j-1}=&(a_{(2j)(2j-1)} - a_{(2j-1)(2j)} +j(a_{12}-a_{21}))\pi- \\
&\frac{j b_2 r_{j-1}I_1(r) -r (b_{2j}\cos \T_{j-1} - b_{2j-1}\sin
\T_{j-1}) I_{j}(r)}{r r_{j-1}},
\end{array}
\end{equation}
where \begin{equation*}
\begin{array}{RL}
I_j(r)=&\int_0^{2\pi} \varphi(r \cos \T) \cos(j\T) d\T.\\
\end{array}
\end{equation*}
Using exactly the same arguments than in the proof of Lemma
\ref{lemma4} is possible to prove that
\begin{equation*}
\begin{array}{RL}
I_j(r)=&\begin{cases}
\pi r & \text{if $j=1$ and $0<r\leq 1$},\\
0 & \text{if $j$ is even and $0<r\leq 1$},\\
L_j(r)& \text{if $j$ is odd and $r>1$},
\end{cases}
\end{array}
\end{equation*}
where
\[
L_j(r)=\frac{4}{j(j^2-1)} \bigg(j \sqrt{r^2-1} \cos (j
\arctan(\sqrt{r^2-1}))- \sin (j \arctan(\sqrt{r^2-1}))\bigg).
\]

The simple zeros of system \eqref{arbitrary system 2} provide the
existence of limit cycles for system \eqref{eq inicial1} but since
$I_j(r)=0$ if $j$ is even and $r>1$, the variables $\T_{j-1}$, for
$j=2,4,6,...$ do not appear in the system
$h_1=h_2=\ldots=h_{2n-1}=0$, so either this system has no zeros, or
if it has zeros, then it has a continuum of zeros, and therefore the
assumption (ii) of the averaging Theorem \ref{eq inicial averaging}
does not hold, and consequently the averaging theory cannot say
anything about the limit cycles of system \eqref{eq inicial1}. The
same occurs for the case $0<r \leq 1$. So we conclude that, using
the averaging theory of first order, we can say nothing about the
number of the limit cycles of system \eqref{eq inicial1}. This
completes the proof of Theorems \ref{main theorem1}.

Now if we consider the discontinuous piecewise linear differential
system \eqref{eq inicial1d}, then its average function
$h=(h_1,h_2,\ldots,h_{2n-1})$ defined in \eqref{MRf1} is
\begin{equation}
\label{arbitrary system 3}
\begin{array}{RL}
h_1=&(a_{11} + a_{22})\pi r + b_1 \widetilde{I}_1,\\
h_{2(j-1)}=&(a_{(2j-1)(2j-1)} + a_{(2j)(2j)})\pi r_{j-1} + (b_{2j-1}
\cos \T_{j-1} + b_{2j}\sin \T_{j-1}) \widetilde{I}_{j},\\
h_{2j-1}=&(a_{(2j)(2j-1)} - a_{(2j-1)(2j)} +j(a_{12}-a_{21}))\pi- \\
&\frac{j b_2 r_{j-1}\widetilde{I}_1 -r (b_{2j}\cos \T_{j-1} -
b_{2j-1}\sin \T_{j-1}) \widetilde{I}_{j}}{r r_{j-1}},
\end{array}
\end{equation} where \begin{equation*}
\begin{array}{RL}
\widetilde{I}_j=&\int_0^{2\pi} \psi(r \cos \T) \cos(j\T) d\T.\\
\end{array}
\end{equation*}
Again we have that
\begin{equation*}
\widetilde{I}_j=\int_0^{2\pi} \psi(r \cos \T) \cos(j\T) d\T
=\begin{cases}
0 & \text{if $j$ is even},\\
\pm \dfrac{4}{(2j-1)} & \text{if $j$ is odd},
\end{cases}
\end{equation*}
and we can see that again either no zeros of the function $h$, or a
continuum of zeros, concluding that the averaging theory of first
order given by Theorem \ref{discontinuous} does not say anything
about the limit cycles of system \eqref{eq inicial1d}. This
completes the proof of Theorems \ref{main theoremd1}.

\section*{Acknowledgements}

The first authors is partially supported by the MINECO grants MTM2013-40998-P and MTM2016-77278-P (FEDER) and the AGAUR grant 2014 SGR568. 
The second author is supported by a Projeto Tem\'{a}tico FAPESP number
2014/00304-2. The third author has a PhD fellowship from CNPq-Brazil.

%We thank to the reviewers their comments and suggestions which help
%us to improve the presentation of the results of this paper.

%The first author is partially supported by a MINECO grant number
%MTM2013-40998-P, an AGAUR grant number 2014SGR568 and a
%FP7-PEOPLE-2012-IRSES grant number 318999. The second author is
%supported by a Projeto Tem\'{a}tico FAPESP number 2014/00304-2. The
%first and the second  authors are also supported by a
%FP7-PEOPLE-2012-IRSES grant number 316338 and a CAPES CSF-PVE grant \linebreak
%88881.030454/2013-01. The third author has
%a PhD fellowship from CAPES-Brazil.

\end{document}